\documentclass{amsart}
\usepackage{graphicx,amssymb,enumerate}
\usepackage{hyperref}

\def\MR#1{\href{http://www.ams.org/mathscinet-getitem?mr=#1}{MR#1}}


\newcommand{\E}{\operatorname{E}}

\newtheorem{theorem}{Theorem}[section]
\newtheorem{lemma}{Lemma}[section]
\newtheorem{corollary}{Corollary}[section]

\theoremstyle{definition}
\newtheorem{algorithm}{Algorithm}[section]
\newtheorem{remark}{Remark}[section]

\title[Fast computation of the potential of charges on a line]{A fast simple
algorithm for computing the potential of charges on a line}

\author[Gimbutas]{Zydrunas Gimbutas}
\address{National Institute of Standards and Technology, Boulder, CO  80305,
USA}
\email{zydrunas.gimbutas@nist.gov}

\author[Marshall]{Nicholas F. Marshall}
\address{Department of Mathematics, Princeton University, Princeton, NJ 08540,
USA}
\email{nicholas.marshall@princeton.edu}
\thanks{N.F.M. was supported in part by NSF DMS-1903015}

\author[Rokhlin]{Vladimir Rokhlin}
\address{Program in Applied Mathematics, Yale University, New Haven, CT 06511,
USA} 
\email{vladimir.rokhlin@yale.edu}
\thanks{V.R. was supported in part by AFOSR FA9550-16-1-0175 and ONR
N00014-14-1-0797.}

\keywords{Fast multipole method, Chebyshev system, generalized Gaussian
quadrature} \subjclass[2010]{31C20  (primary) and 	41A55, 41A50 (secondary)}

\begin{document}

\begin{abstract}
We present a fast method for evaluating expressions of the form
$$
u_j = \sum_{i = 1,i \not = j}^n \frac{\alpha_i}{x_i - x_j}, \quad \text{for} \quad 
j = 1,\ldots,n,
$$
where $\alpha_i$ are real numbers, and $x_i$ are points in a compact interval of
$\mathbb{R}$. This expression can be viewed as representing the electrostatic
potential generated by charges on a line in $\mathbb{R}^3$. While fast
algorithms for computing the electrostatic potential of general distributions of
charges in $\mathbb{R}^3$ exist, in a number of situations in computational
physics it is useful to have a simple and extremely fast method for evaluating
the potential of charges on a line; we present such a method in this paper, and
report numerical results for several examples.  
\end{abstract}

\maketitle

\section{Introduction and motivation}
\subsection{Introduction}
In this paper, we describe a simple fast algorithm for evaluating expressions of
the form 
\begin{equation} \label{eq1}
u_j = \sum_{i=1, i \not = j}^n \frac{\alpha_i}{x_i - x_j}, \quad \text{for}
\quad j = 1,\ldots,n,
\end{equation} 
where $\alpha_i$ are real numbers, and $x_i$ are points in a
compact interval of $\mathbb{R}$. This expression can be viewed as representing
the electrostatic potential generated by charges on a line in $\mathbb{R}^3$.
We remark that fast algorithms for computing the electrostatic potential
generated by general distributions of charges in $\mathbb{R}^3$ exist, see for
example the Fast Multipole Method \cite{MR936632} whose relation to the method
presented in this paper is discussed in \S \ref{relatedworks}. However, in a number of
situations in computational physics it is useful to have a simple and extremely
fast method for evaluating the potential of charges on a line; we present such a
method in this paper.  Under mild assumptions the presented method  involves
$\mathcal{O}(n \log n)$ operations and has a small constant.  The method is
based on writing the potential $1/r$ as
$$
\frac{1}{r} = \int_0^\infty e^{-r t} dt.
$$
We show that there exists a small set of quadrature nodes $t_1,\ldots,t_m$
and weights $w_1,\ldots,w_m$ such that for a large range of values of $r$ we
have
\begin{equation} \label{oapprox}
\frac{1}{r} \approx \sum_{j=1}^m w_j e^{-r t_j},
\end{equation}
see Lemma \ref{quadlem}, which is a quantitative version of \eqref{oapprox}.
Numerically the nodes $t_1,\ldots,t_m$ and weights $w_1,\ldots,w_m$ are
computed using a procedure for constructing generalized Gaussian quadratures,
see \S \ref{nodesandweights}.  An advantage of representing $1/r$ as a sum
of exponentials is that the translation operator 
\begin{equation} \label{transeq}
\frac{1}{r} \mapsto \frac{1}{r+r'}
\end{equation}
can be computed by taking an inner product of the weights
$(w_1,\ldots,w_m)$ with a diagonal transformation of the vector
$(e^{-r t_1},\ldots,e^{-r t_m})$. Indeed, we have
\begin{equation} \label{diageq}
\frac{1}{r+r'} \approx  \sum_{j=1}^m
w_j e^{-(r +r') t_j} = \sum_{j=1}^m
w_j e^{-r' t_j} e^{-r t_j}.
\end{equation}
The algorithm described in \S \ref{algomain} leverages the existence of this
diagonal translation operator to efficiently evaluate \eqref{eq1}. 

\subsection{Relation to past work} \label{relatedworks}
We emphasize that fast algorithms for computing the potential generated by
arbitrary distributions of charges in $\mathbb{R}^3$ exist. An example of such
an algorithm is the Fast Multipole Method that was introduced by \cite{MR936632}
and has been extended by several authors including \cite{MR2558773, MR1489257, MR1273161}. In this paper, we present a simple scheme for the special case
where the charges are on a line, which occurs in a number of numerical
calcuations, see \ref{motivation}. The presented scheme has a much smaller
runtime constant compared to general methods, and is based on the diagonal form
\eqref{diageq} of the translation operator \eqref{transeq}. The idea of using
the diagonal form of this translation operator to accelerate numerical
computations has been studied by several authors; in particular, the diagonal
form is used in algorithms by Dutt, Gu and Rokhlin \cite{MR1411845}, and Yavin
and Rokhlin \cite{MR1675269} and was subsequently studied in detail by Beylkin
and Monz\'on \cite{MR2595881,MR2147060}.  

The current paper improves upon these past works by taking advantage of robust
generalized Gaussian quadrature codes \cite{MR2671296} that were not previously
available; these codes construct a quadrature rule that is exact for functions
in the linear span of a given Chebyshev system, and can be viewed as
a constructive version of Lemma \ref{krein} of Kre\u{\i}n \cite{MR0113106}. The
resulting fast algorithm presented in \S \ref{algomain} simplifies past
approaches, and has a small runtime constant; in particular, its computational
cost is similar to the computational cost of $5$-$10$ Fast Fourier Transforms on
data of a similar length, see \ref{numerics}.

\subsection{Motivation} \label{motivation}
Expressions of the form \eqref{eq1} appear in a number of situations in
computational physics. In particular, such expressions arise in connection with
the Hilbert Transform
$$
H f(x) = \lim_{\varepsilon \rightarrow 0} \frac{1}{\pi} \int_{|x-y|\ge
\varepsilon} \frac{f(y)}{y - x} dy.
$$
For example, the computation of the projection $P_m f$ of a function $f$
onto the first $m+1$ functions in a family of orthogonal polynomials can be
reduced to an expression of the form \eqref{eq1} by using the
Christoffel--Darboux formula, which is related to the
Hilbert transform; we detail the reduction of $P_m f$ to an expression of the
form \eqref{eq1} in the following.

Let $\{p_k\}_{k=0}^\infty$ be a family of monic polynomials that are orthogonal
with respect to the weight $w(x) \ge 0$ on $(a,b) \subseteq \mathbb{R}$.
Consider the projection operator 
$$
P_m f(x) := \int_a^b \sum_{k=0}^m \frac{p_k(x) p_k(y)}{h_k} f(y) w(y) dy,
$$
where $h_k := \int_a^b p_k(x)^2 w(x) dx$. Let $x_1,\ldots,x_n$ and
$w_1,\ldots,w_n$ be the $n > m/2$ point Gaussian quadrature nodes and
weights associated with $\{p_k\}_{k=0}^\infty$, and set
\begin{equation} \label{eq3}
u_j := \sum_{i=1}^n \sum_{k=0}^m 
\frac{p_k(x_j) p_k(x_i)}{h_k} f(x_i) w(x_i) ,
\quad \text{for} \quad j = 1,\ldots,n.
\end{equation}
By construction
the polynomial that interpolates the values $u_1,\ldots,u_n$ at the points
$x_1,\ldots,x_n$ will accurately approximate $P_m f$ on $(a,b)$ when
$f$ is sufficiently smooth, see for example \S 7.4.6 of Dahlquist and
Bj\"orck \cite{DahlquistBjorck1974}.  Directly evaluating \eqref{eq3} would
require $\Omega(n^2)$ operations. In contrast, the algorithm of this paper
together with the Christoffel--Darboux Formula can be used to evaluate 
\eqref{eq3} in $\mathcal{O}(n \log n)$ operations.  The
Christoffel-Darboux formula states that
\begin{equation} \label{eq4}
\sum_{k=0}^m \frac{p_k(x) p_k(y)}{h_k} = \frac{1}{h_m} \frac{p_{m+1}(x) p_m(y)
-p_m(x) p_{m+1}(y)}{x-y},
\end{equation}
see \S 18.2(v) of \cite{nist}. Using \eqref{eq4} to rewrite \eqref{eq3} yields
\begin{equation} \label{eq5}
u_j = \frac{1}{h_m}\left( f(x_j) +  \sum_{i=1, i \not = j}^m \frac{p_{m+1}(x_j)
p_m(x_i) -p_m(x_j) p_{m+1}(x_i)}{x_j-x_i} f(x_i) w(x_i)  \right),
\end{equation}
where we have used the fact that the diagonal term of the double summation is
equal to $f(x_j)/h_m$. The summation in \eqref{eq5} can be rearranged into two
expressions of the form \eqref{eq1}, and thus the method of this paper can be
used to compute a representation of $P_m f$ in $\mathcal{O}(n \log n )$
operations.

\begin{remark}
Analogs of the Christoffel--Darboux formula hold for many other families of
functions; for example, if $J_{\nu}(w)$ is a Bessel function of the first kind,
then  we have
$$
\sum_{k=1}^\infty 2(\nu +k) J_{\nu +k}(w) J_{v+k}(z) = \frac{w z}{w - z} \left(
J_{\nu+1}(w) J_\nu(z) - J_\nu(w) J_{\nu+1}(z) \right),
$$
see \cite{Tygert2006}. This formula can be used to write a projection operator
related to Bessel functions in an analogous form to \eqref{eq5}, and the
algorithm of this paper can be similarly applied
\end{remark}

\begin{remark}
A simple modification of the algorithm presented in this paper can be used to
evaluate more general expressions of the form
$$
v_j = \sum_{i=1}^n \frac{\alpha_i}{x_i - y_j}, \quad \text{for} \quad j =
1,\ldots,m,
$$
where $x_1,\ldots,x_n$ are source points, and $y_1,\ldots,y_m$ are target
points. For simplicity, this paper focuses on the case where the source and
target points are the same, which is the case in the projection application
described above.
\end{remark}

\section{Main result}

\subsection{Main result} \label{mainresult}
Our principle analytical result is the following theorem, which provides precise
accuracy and computational complexity guarantees for the algorithm presented in
this paper, which is detailed in \S \ref{algomain}.

\begin{theorem} \label{thm1}
Let $x_1 <\ldots <x_n \in [a,b]$ and $\alpha_1,\ldots,\alpha_n
\in \mathbb{R}$ be given. Set
$$
u_j := \sum_{i=1,i \not = j}^n \frac{\alpha_i}{x_i - x_j}, \quad \text{for}
\quad j = 1,\ldots,n.  
$$
Given $\delta >0$ and $\varepsilon > 0$, the algorithm
described in \S \ref{algomain} computes values $\tilde{u}_j$  such
\begin{equation} \label{errest}
\frac{\left| \tilde{u}_j - u_j \right|}{\sum_{i=1}^n |\alpha_i|} \le \varepsilon
, \quad \text{for} \quad j = 1,\ldots,n
\end{equation}
in $\mathcal{O} \left(n \log (\delta^{-1}) \log(\varepsilon^{-1}) +  N_\delta 
\right)$ operations, where
\begin{equation} \label{ndelta}
N_\delta := \sum_{j=1}^n \# \{ x_i : |x_j - x_i| < \delta (b-a) \}.
\end{equation}

\end{theorem}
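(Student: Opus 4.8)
The plan is to split each of the $n$ sums into a \emph{near} contribution, which is evaluated directly, and a \emph{far} contribution, which is evaluated through the exponential representation $1/r=\int_0^\infty e^{-rt}\,dt$ together with the diagonal form \eqref{diageq} of the translation operator. First I would fix $\delta$ and $\varepsilon$ and apply Lemma \ref{quadlem} (possibly after rescaling the quadrature to the interval $[\delta(b-a),\,b-a]$) to obtain nodes $t_1,\ldots,t_m>0$ and weights $w_1,\ldots,w_m$ with $m=\mathcal{O}\!\left(\log(\delta^{-1})\log(\varepsilon^{-1})\right)$ and
$$
\Bigl|\tfrac1r-\sum_{k=1}^m w_k e^{-rt_k}\Bigr|\le\varepsilon\qquad\text{for all } r\in[\delta(b-a),\,b-a].
$$
For each $j$ I set $\mathcal{N}_j:=\{i\neq j:|x_i-x_j|<\delta(b-a)\}$ and $\mathcal{F}_j:=\{i:|x_i-x_j|\ge\delta(b-a)\}$, and split $u_j=u_j^{\mathrm{near}}+u_j^{\mathrm{far}}$ into the sums of $\alpha_i/(x_i-x_j)$ over $\mathcal{N}_j$ and over $\mathcal{F}_j$.

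The near part $u_j^{\mathrm{near}}$ is computed exactly. Since the $x_i$ are sorted, each $\mathcal{N}_j$ is a block of consecutive indices whose left and right endpoints are non-decreasing in $j$; hence the blocks are tracked by two forward-moving pointers, and forming all the $u_j^{\mathrm{near}}$ costs $\mathcal{O}\!\left(n+\sum_j|\mathcal{N}_j|\right)=\mathcal{O}(N_\delta)$ operations (using $N_\delta\ge n$). For the far part I replace $1/(x_i-x_j)$, for $i\in\mathcal{F}_j$, by its quadrature approximation; writing $\sigma:=\sgn(x_i-x_j)$ and factoring $e^{-|x_i-x_j|t_k}=e^{-\sigma x_it_k}\,e^{\sigma x_jt_k}$ --- which is exactly the diagonal translation $1/r\mapsto1/(r+r')$ of \eqref{diageq} --- one gets
$$
\tilde u_j^{\mathrm{far}}=\sum_{k=1}^m w_k\Bigl(e^{x_jt_k}\!\!\!\!\sum_{i:\,x_i\ge x_j+\delta(b-a)}\!\!\!\!\alpha_i e^{-x_it_k}\;-\;e^{-x_jt_k}\!\!\!\!\sum_{i:\,x_i\le x_j-\delta(b-a)}\!\!\!\!\alpha_i e^{x_it_k}\Bigr).
$$
For each fixed $k$, as $j$ increases the first inner sum only loses terms and the second only gains terms, so both are maintained over all $j$ by a single left-to-right sweep with forward-moving pointers; after an $\mathcal{O}(nm)$ precomputation of the numbers $e^{\pm x_it_k}$ (which also supplies the prefactors $e^{\pm x_jt_k}$, since each $x_j$ is among the $x_i$), this sweep costs $\mathcal{O}(n)$ per $k$, and assembling the $m$-term combination for each $j$ costs $\mathcal{O}(m)$ per $j$. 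The far part therefore costs $\mathcal{O}(nm)=\mathcal{O}\!\left(n\log(\delta^{-1})\log(\varepsilon^{-1})\right)$ operations, so the total is $\mathcal{O}\!\left(n\log(\delta^{-1})\log(\varepsilon^{-1})+N_\delta\right)$, as claimed.

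Because the near part is exact, $\tilde u_j-u_j=\tilde u_j^{\mathrm{far}}-u_j^{\mathrm{far}}$, and a term-by-term bound using Lemma \ref{quadlem} together with $|x_i-x_j|\in[\delta(b-a),\,b-a]$ for $i\in\mathcal{F}_j$ gives
$$
|\tilde u_j-u_j|\le\sum_{i\in\mathcal{F}_j}|\alpha_i|\,\Bigl|\tfrac{1}{|x_i-x_j|}-\sum_{k=1}^m w_ke^{-|x_i-x_j|t_k}\Bigr|\le\varepsilon\sum_{i=1}^n|\alpha_i|,
$$
which is \eqref{errest}. I expect the only delicate point in this argument to be the bookkeeping that makes the reorganized far sum run in $\mathcal{O}(nm)$ time: one must check that the near/far block boundaries and the two one-sided running sums can each be updated with amortized $\mathcal{O}(1)$ work per source as $j$ runs from $1$ to $n$, which rests only on the points being sorted and on the translation acting diagonally. (In floating-point arithmetic one must additionally prevent the factored exponentials $e^{\pm x_jt_k}$ from overflowing, e.g.\ by re-centering each interval and propagating the running sums via \eqref{diageq} incrementally; this affects neither the accuracy statement nor the operation count.) The genuinely substantive ingredient is Lemma \ref{quadlem} --- the existence of an $\varepsilon$-accurate quadrature for $\int_0^\infty e^{-rt}\,dt$, valid uniformly for $r\in[\delta(b-a),\,b-a]$, that uses only $\mathcal{O}\!\left(\log(\delta^{-1})\log(\varepsilon^{-1})\right)$ nodes.
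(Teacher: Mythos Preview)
Your argument is correct and is essentially the paper's own proof: the same near/far split at distance $\delta(b-a)$, direct evaluation of the near part at cost $\mathcal{O}(N_\delta)$, exponential-sum approximation of the far part via Lemma~\ref{quadlem} at cost $\mathcal{O}(nm)$ with $m=\mathcal{O}(\log(\delta^{-1})\log(\varepsilon^{-1}))$, and the same termwise error bound. The only cosmetic difference is that the paper treats the left and right far contributions by a forward pass and a backward pass, updating the running sums by the incremental rule $g_k(j_0{+}1)=\alpha_{j_0+1}+e^{-(x_{j_0+1}-x_{j_0})t_k}\,g_k(j_0)$ (so every exponential that appears lies in $(0,1]$), whereas you run a single sweep with the absolute factorization $e^{\mp x_it_k}e^{\pm x_jt_k}$ and an add/subtract pair of running sums; as you yourself observe in the parenthetical, the paper's incremental form is precisely the overflow-safe reformulation, and in exact arithmetic the two organizations are equivalent.
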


The proof of Theorem \ref{thm1} is given in \S \ref{proofmainresult}.  Under
typical conditions,  the presented algorithm involves $\mathcal{O}( n \log n)$
operations. The following corollary describes a case of interest, where the
points $x_1,\ldots,x_n$ are Chebyshev nodes for a compact interval $[a,b]$ (we
define Chebyshev nodes in \S \ref{preliminaries}).

\begin{corollary} \label{cor1}
Fix $\varepsilon = 10^{-15}$, and let the points $x_1,\ldots,x_n$ be
Chebyshev nodes on $[a,b]$. If $\delta = 1/n$, then the algorithm of \S
\ref{algomain} involves $\mathcal{O}( n \log n)$ operations.
\end{corollary}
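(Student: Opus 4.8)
The plan is to combine the complexity bound of Theorem~\ref{thm1} with an elementary counting estimate for the quantity $N_\delta$ defined in \eqref{ndelta}. Since $\varepsilon = 10^{-15}$ is fixed, $\log(\varepsilon^{-1})$ is an absolute constant; and with $\delta = 1/n$ we have $\log(\delta^{-1}) = \log n$, so the first term $\mathcal{O}\!\left(n\log(\delta^{-1})\log(\varepsilon^{-1})\right)$ in Theorem~\ref{thm1} is already $\mathcal{O}(n\log n)$. Hence the whole corollary reduces to showing that, for Chebyshev nodes on $[a,b]$ and $\delta = 1/n$,
$$
N_{1/n} \;=\; \sum_{j=1}^n \#\{\, x_i : |x_j - x_i| < (b-a)/n \,\} \;=\; \mathcal{O}(n\log n).
$$

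After an affine change of variables we may take $[a,b] = [-1,1]$, so that $x_k = \cos\theta_k$ with the $\theta_k$ equispaced in $(0,\pi)$ at spacing $\pi/n$, and the window has half-width $2/n$. The analytical input I would use is the standard estimate that the distance from $x_k$ to the nearest endpoint of $[-1,1]$ is $\Theta\!\big((\min(k,\,n+1-k)/n)^2\big)$, which follows from $1-\cos\theta = 2\sin^2(\theta/2)$. From this one can bound the number of nodes in a window of half-width $2/n$ about $x_j$: writing $k = \min(j,\,n+1-j)$, for nodes on the same half as $x_j$ the product-to-sum formula gives $|x_i-x_j| \asymp |i-j|(i+j)/n^2$, so $|x_i - x_j| < 2/n$ forces $|i-j|(i+j) \lesssim n$; this leaves at most $\mathcal{O}(\sqrt n)$ admissible indices when $k \lesssim \sqrt n$ and at most $\mathcal{O}(n/k)$ admissible indices when $k \gtrsim \sqrt n$, while nodes on the opposite half are relevant only when $x_j$ lies within $\mathcal{O}(1)$ of the midpoint, where they contribute $\mathcal{O}(1)$. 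In short, $\#\{x_i : |x_j - x_i| < 2/n\} = \mathcal{O}\!\big(\min(\sqrt n,\, n/k)\big)$.

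Summing over $j$ and grouping the indices by $k = \min(j,\,n+1-j)$ (each value of $k$ arising from $\mathcal{O}(1)$ indices $j$) then gives
$$
N_{1/n} \;\lesssim\; \sum_{k=1}^{\lfloor \sqrt n\rfloor} \sqrt n \;+\; \sum_{k = \lceil \sqrt n\rceil}^{\lceil n/2 \rceil} \frac{n}{k}
\;\lesssim\; n \;+\; n\log\!\sqrt n
\;=\; \mathcal{O}(n\log n),
$$
where the logarithmic factor comes precisely from the harmonic-type sum over the transition region between the endpoint clusters near $\pm 1$ and the nodes near the midpoint. Combining this bound with the $\mathcal{O}(n\log n)$ bound for the first term of Theorem~\ref{thm1} proves the corollary. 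I expect the only delicate point to be the bookkeeping at the scale $k \asymp \sqrt n$, where the crossover between the $\mathcal{O}(\sqrt n)$ and $\mathcal{O}(n/k)$ regimes occurs and one must check that neither estimate is off by more than an absolute constant factor; everything else is routine.
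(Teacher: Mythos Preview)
Your argument is correct. The reduction to bounding $N_{1/n}$ is the same as in the paper, but your route to $N_{1/n}=\mathcal{O}(n\log n)$ is genuinely different from the paper's. The paper argues via a density estimate: it observes that the number of Chebyshev nodes in an interval of radius $1/n$ about a point $x\in(-1,1)$ is $\mathcal{O}(1/\sqrt{1-x^2})$, then replaces the sum over $j$ by the integral $\int_{1/n^2}^{\pi-1/n^2} \frac{n}{|\sin t|}\,dt$ and evaluates it as $\mathcal{O}(n\log n)$. You instead work directly with indices, using the product-to-sum identity $|x_i-x_j|=2\,\bigl|\sin\tfrac{\theta_i+\theta_j}{2}\bigr|\,\bigl|\sin\tfrac{\theta_i-\theta_j}{2}\bigr|$ to get the discrete estimate $|x_i-x_j|\asymp |i-j|(i+j)/n^2$ on each half, derive the per-index bound $\mathcal{O}(\min(\sqrt{n},\,n/k))$, and finish with a harmonic sum. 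Your approach is more elementary (no integration) and actually gives a sharper per-node count near the endpoints---the paper's density bound $1/\sqrt{1-x_j^2}$ gives $\mathcal{O}(n)$ at $j=1$ where the truth is $\mathcal{O}(\sqrt{n})$---at the cost of a little more casework at the crossover $k\asymp\sqrt{n}$ and the midpoint. The paper's integral argument is shorter and makes the arcsine density of Chebyshev nodes explicit. Either way the logarithm appears for the same reason: summing $n/k$ (equivalently, integrating $1/\sin t$) over the bulk region.
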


The proof of Corollary \ref{cor1} is given in \S \ref{completeproof}.  The
following corollary states that a similar result holds for uniformly random
points.

\begin{corollary} \label{cor2}
Fix $\varepsilon = 10^{-15}$, and suppose that $x_1,\ldots,x_n$ are
sampled uniformly at random from $[a,b]$. If $\delta = 1/n$, then the
algorithm of \S \ref{algomain} involves $\mathcal{O}( n \log n)$ operations
with high probability.
\end{corollary}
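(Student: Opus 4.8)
The plan is to deduce this from Theorem~\ref{thm1} by taking $\delta = 1/n$ and the fixed value $\varepsilon = 10^{-15}$. With these choices the first term in the operation count of Theorem~\ref{thm1} is $\mathcal{O}(n\log(\delta^{-1})\log(\varepsilon^{-1})) = \mathcal{O}(n\log n)$, since $\log(\varepsilon^{-1})$ is an absolute constant. Thus the corollary reduces to showing that the random quantity $N_\delta$ defined in \eqref{ndelta} satisfies $N_\delta = \mathcal{O}(n\log n)$ with high probability when $x_1,\ldots,x_n$ are i.i.d.\ uniform on $[a,b]$ and $\delta = 1/n$.

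First I would reduce $N_\delta$ to a maximum-occupancy (balls-into-bins) statistic. Partition $[a,b]$ into $n$ consecutive subintervals $I_1,\ldots,I_n$ of equal length $(b-a)/n$, and set $M := \max_{1\le k\le n}\#\{i : x_i \in I_k\}$. If $x_j \in I_k$, then every $x_i$ with $|x_i - x_j| < \delta(b-a) = (b-a)/n$ lies in an interval of length $2(b-a)/n$ centered at $x_j$, which is contained in $I_{k-1}\cup I_k\cup I_{k+1}$ (with the obvious convention near the endpoints). Hence $\#\{x_i : |x_j-x_i| < \delta(b-a)\} \le 3M$ for every $j$, and summing over $j$ gives the deterministic bound $N_\delta \le 3nM$.

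It then remains to bound $M$. Since the number of points in a fixed subinterval is $\mathrm{Binomial}(n,1/n)$, a union bound over the $n$ subintervals gives, for any positive integer $k$,
\[
\Pr(M \ge k) \;\le\; n\binom{n}{k}\left(\tfrac1n\right)^{k} \;\le\; \frac{n}{k!}.
\]
Choosing $k = \lceil \log n\rceil$ makes the right-hand side tend to $0$, because $\log(k!) \sim k\log k \sim (\log n)(\log\log n)$ dominates $\log n$; hence $M \le \log n$ with high probability, and therefore $N_\delta \le 3n\log n$ with high probability. Combining this with the $\mathcal{O}(n\log n)$ bound on the first term of the operation count yields the claim. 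The only real subtlety is that the indicator summands in $N_\delta$ are not independent, so a direct Chernoff- or Markov-type estimate applied to $N_\delta$ is awkward; the binning step above circumvents this by passing to the well-understood maximum-load statistic $M$ at the cost of only a constant factor. (Alternatively one could note $\E[N_\delta] = \mathcal{O}(n)$ using $\Pr(|x_i-x_j| < \delta(b-a)) \le 2\delta$ and combine this with a variance bound, but the occupancy argument is cleaner and gives the high-probability statement directly.)
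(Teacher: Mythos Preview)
Your argument is correct. The paper itself does not give a proof: it simply asserts that ``the proof of Corollary~\ref{cor2} is immediate from standard probabilistic estimates.'' Your balls-into-bins reduction, bounding $N_\delta \le 3nM$ via the maximum bin occupancy $M$ and then controlling $M$ by a union bound over $n$ bins with $\mathrm{Binomial}(n,1/n)$ counts, is precisely the kind of standard estimate the authors have in mind, so your proposal should be viewed as a fleshed-out version of the paper's one-line remark rather than a different route. (As a minor aside, the classical balls-into-bins result actually gives $M = \Theta(\log n/\log\log n)$ with high probability, so your bound $N_\delta = \mathcal{O}(n\log n)$ is comfortably loose; the weaker $M \le \log n$ estimate you derive is more than enough here.)
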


The proof of Corollary \ref{cor2} is immediate from standard probabilistic
estimates.  The following remark describes an adversarial configuration of
points.

\begin{remark} \label{rmk1}
Fix $\varepsilon > 0$, and let $x_1,\ldots,x_{2 n}$ be a collection of
points such that $x_1,\ldots,x_n$ and $x_{n+1},\ldots,x_{2 n}$ are evenly spaced
in $[0,2^{-n}]$ and $[1-2^{-n},1]$, respectively, that is
$$
x_j = 2^{-n} \left( \frac{j-1}{n -1} \right),
\quad \text{and} \quad
x_{n+j} = 1 +2^{-n} \left( \frac{j-n}{n -1} \right),
\quad \text{for} \quad j = 1,\ldots,n.
$$
We claim that Theorem \ref{thm1} cannot guarantee a complexity better than
$\mathcal{O}(n^2)$ for this configuration of points. Indeed, if $\delta \ge
2^{-n}$, then $N_\delta \ge n^2/2$, and if $\delta < 2^{-n}$, then
$\log_2(\delta^{-1}) > n$.  In either case
$$
n \log(\delta^{-1}) + N_\delta = \Omega(n^2).
$$
This complexity is indicative  of the performance of the algorithm for this
point configuration; the reason that the algorithm performs poorly is that
structures exist at two different scales.  If such a configuration were
encountered in practice, it would be possible to modify the algorithm of \S
\ref{algomain} to also involve two different scales to achieve evaluation in
$\mathcal{O}(n \log n)$ operations.
\end{remark}

\section{Algorithm} \label{algomain}
\subsection{High level summary} \label{high}
The algorithm involves passing over the points $x_1,\ldots,x_n$ twice. First, we
pass over the points in ascending order and compute
\begin{equation} \label{uplus}
\tilde{u}_j^+ \approx \sum_{i=1}^{j-1} \frac{\alpha_i}{x_i - x_j},
\quad \text{for} \quad j = 1,\ldots,n,
\end{equation}
and second, we pass over the points in descending order and compute
\begin{equation} \label{uminus}
\tilde{u}_j^- \approx \sum_{i=j+1}^{n} \frac{\alpha_i}{x_i - x_j},
\quad \text{for} \quad j = 1,\ldots,n.
\end{equation}
Finally, we define  $ \tilde{u}_j := \tilde{u}_j^+ + \tilde{u}_j^-$ for $j =
1,\ldots,n$ such that
$$
\tilde{u}_j \approx \sum_{i=1, i \not = j}^n \frac{\alpha_i}{x_i  - x_j}, \quad
\text{for} \quad j = 1,\ldots,n.
$$
We call the computation of $\tilde{u}_1^+,\ldots,\tilde{u}_n^+$ the forward pass
of the algorithm, and the computation of $\tilde{u}_1^-,\ldots,\tilde{u}_n^+$
the backward pass of the algorithm. The forward pass of the algorithm computes
the potential generated by all points to the left of a given point, while the
backward pass of the algorithm computes the potential generated by all points to
the right of a given point. In \S \ref{informal} and \S \ref{detailed} we give
an informal and detailed description of the forward pass of the algorithm.  The
backward pass of the algorithm is identical except it considers the points in
reverse order.

\subsection{Informal description}  \label{informal}
In the following, we give an informal description of the forward pass of the
algorithm that computes
$$
\tilde{u}_j^+ \approx \sum_{i=1}^{j-1} \frac{\alpha_i}{x_i - x_j},
\quad \text{for} \quad j = 1,\ldots,n.
$$
Assume that a small set of nodes
$t_1,\ldots,t_m$ and weights $w_1,\ldots,w_m$ such that
\begin{equation} \label{comp2}
\frac{1}{r} \approx \sum_{i=1}^m w_i e^{-r t_i} \quad
\text{for} \quad r \in [\delta(b-a),b-a],
\end{equation}
where $\delta > 0$ is given and fixed. The existence and computation of
such nodes and weights is described in \S \ref{completeproof} and \S
\ref{nodesandweights}. We divide the sum defining $u_j^+$ into two parts:
\begin{equation} \label{divide}
\tilde{u}_j^+ \approx \sum_{i=1}^{j_0} \frac{\alpha_i}{x_i - x_j}
+ \sum_{i=j_0+1}^{j-1} \frac{\alpha_i}{x_i - x_j},
\end{equation}
where $j_0 = \max \big\{ i  : x_j - x_i > \delta (b-a)
\big\}.$ By definition, the points $x_1,\ldots,x_{j_0}$ are all distance at
least $\delta(b-a)$ from $x_j$. Therefore, by \eqref{comp2} 
$$
\tilde{u}_j^+ \approx - \sum_{i=1}^{j_0} \sum_{k=1}^m  w_k \alpha_i e^{-(x_j-x_i)
t_k} + \sum_{i=j_0+1}^{j-1} \frac{\alpha_i}{x_i - x_j}.
$$
If we define
\begin{equation} \label{geq}
g_k(j_0) = \sum_{i=1}^{j_0} \alpha_i e^{-(x_{j_0} - x_i)t_k}, \quad
\text{for} \quad k = 1,\ldots,m,
\end{equation}
then it is straightforward to verify that
\begin{equation} \label{maineqap}
\tilde{u}_j^+ \approx - \sum_{k=1}^m w_k g_k(j_0) e^{-(x_j - x_{j_0}) t_k} +
\sum_{i=j_0+1}^{j-1} \frac{\alpha_i}{x_i - x_j}.
\end{equation}
Observe that we can update $g_k(j_0)$ to $g_k(j_0+1)$ using the following
formula 
\begin{equation} \label{updateg}
g_k(j_0+1) =  \alpha_{j_0} + e^{-(x_{j_0+1} - x_{j_0}) t_k} g_k(j_0), \quad
\text{for} \quad k = 1,\ldots,m.
\end{equation}
We can now summarize the algorithm for computing
$\tilde{u}_1^+,\ldots,\tilde{u}_n^+$. For each $j$, we compute
$\tilde{u}_j^+$ by the following three steps:
\begin{enumerate}[\quad 1.]
\item Update $g_1,\ldots,g_m$ as necessary
\item Use $g_1,\ldots,g_m$ to evaluate the potential from $x_i$ such
that
$x_j - x_i > \delta (b-a)$
\item Directly evaluate the potential from $x_i$ such that $0 < x_j - x_i < \delta (b-a)$
\end{enumerate}
By \eqref{updateg}, each update of $g_1,\ldots,g_m$ requires $\mathcal{O}(m)$
operations, and we must update $g_1,\ldots,g_m$ at most $n$ times, so we
conclude that the total cost of the first step of the algorithm is
$\mathcal{O}(n m)$ operations. For each $j = 1,\ldots,n$, the second and third
step of the algorithm involve $\mathcal{O}(m)$ and $\mathcal{O}(\# \{x _i : 0 <
x_j - x_i < \delta(b-a)\})$ operations, respectively, see \eqref{maineqap}. It
follows that the total cost of the second and third step of the algorithm is
$\mathcal{O}(n m + N_\delta)$ operations, where $N_\delta$ is defined in
\eqref{ndelta}. We conclude that $\tilde{u}_1^+,\ldots,\tilde{u}_n^+$ can be
computed in $\mathcal{O}( n m + N_\delta)$ operations.  In \S
\ref{proofmainresult}, we complete the proof of the computational complexity
guarantees of Theorem \ref{thm1} by showing that there exist $m = \mathcal{O}(
\log(\delta^{-1}) \log( \varepsilon^{-1}) )$ nodes $t_1,\ldots,t_m$ and weights
$w_1,\ldots,w_m$ that satisfy \eqref{comp2}, where $\varepsilon > 0$ is the
approximation error in \eqref{comp2}.

\subsection{Detailed description} \label{detailed} 
\label{algorithm} In the following, we give a detailed description of the
forward pass of the algorithm that computes
$\tilde{u}_1^+,\ldots,\tilde{u}_n^+$. Suppose that $\delta > 0$ and $\varepsilon
> 0$ are given and fixed.  We describe the algorithm under the assumption that
we are given quadrature nodes $t_1,\ldots,t_m$ and weights $w_1,\ldots,w_m$ such
that
\begin{equation} \label{quad}
\left| \frac{1}{r} - \sum_{j=1}^m  w_j e^{-r t_j} \right| \le \varepsilon
\quad \text{for} \quad 
r \in [\delta (b-a), b-a].
\end{equation}
The existence of such weights and nodes is established in \S
\ref{completeproof}, and the computation of such nodes and weights is discussed
in \S \ref{nodesandweights}.  To simplify the description of the algorithm, we
assume that $x_0 = -\infty$ is a placeholder node that does not generate a
potential.

\begin{algorithm} \label{algo1}
\textit{Input:} $x_1 < \cdots < x_n \in [a,b]$,
$\alpha_1,\ldots,\alpha_n \in \mathbb{R}$.
\textit{Output:} $\tilde{u}_1^+,\ldots,\tilde{u}_n^+$.

\begin{enumerate}[\quad 1:]
\item \qquad $j_0 = 0$ and $g_1 = \cdots = g_m = 0$
\item
\item \qquad \textit{main loop:}
\item \qquad \textbf{for} $j = 1,\ldots,n$
\item
\item \qquad \qquad \textit{update $g_1,\ldots,g_m$ and $j_0$:}
\item \qquad \qquad \textbf{while} $x_j - x_{j_0+1} > \delta(b-a)$ 
\item \qquad \qquad \qquad \textbf{for} i = 1,\ldots,m
\item \qquad \qquad \qquad \qquad
$g_i = g_i e^{-(x_{j_0+1} - x_{j_0}) t_i}+ \alpha_i$
\item \qquad \qquad \qquad \textbf{end for}
\item \qquad \qquad \qquad $j_0 = j_0 + 1$
\item \qquad \qquad \textbf{end while}
\item
\item \qquad \qquad \textit{compute potential from $x_i$ such that $x_i \le x_{j_0}:$}
\item \qquad \qquad $\tilde{u}_j^+ = 0$
\item \qquad \qquad \textbf{for} $i = 1,\ldots,m$
\item \qquad \qquad \qquad 
$\tilde{u}_j^+ = \tilde{u}_j^+ - w_i g_i e^{-(x_j - x_{j_0}) t_i}$
\item \qquad \qquad \textbf{end for}
\item
\item \qquad \qquad \textit{compute potential from $x_i$ such that $x_{j_0+1} \le x_i \le x_{j-1}$}
\item \qquad \qquad \textbf{for} $i = j_0+1,\ldots,j-1$
\item \qquad \qquad \qquad 
$\tilde{u}_j^+ = \tilde{u}_j^+ +  \alpha_i/(x_i - x_j).$
\item \qquad \qquad \textbf{end for}
\item \qquad \textbf{end for}
\end{enumerate}
\end{algorithm}

\begin{remark}  \label{precomputation}
In some applications, it may be necessary to evaluate an expression of the form
\eqref{eq1} for many different weights $\alpha_1,\ldots,\alpha_n$ associated
with a fixed set of points $x_1,\ldots,x_n$.  For example, in the projection
application described in \S \ref{motivation} the weights
$\alpha_1,\ldots,\alpha_n$ correspond to a function that is being projected,
while the points $x_1,\ldots,x_n$ are a fixed set of quadrature nodes.  In such
situations, pre-computing the exponentials $e^{-(x_j - x_{j_0}) t_i}$ used in
the Algorithm \ref{algo1} will significantly improve the runtime, see
\S \ref{numres}.
\end{remark}

\section{Proof of Main Result} \label{proofmainresult}
\subsection{Organization}
In this section we complete the proof of Theorem \ref{thm1}; the section is
organized as follows. In \S \ref{preliminaries} we give mathematical
preliminaries. In \S \ref{tech} we state and prove two technical lemmas.
In \S \ref{completeproof} we prove Lemma \ref{quadlem}, which together with
the analysis in \S \ref{algomain} establishes Theorem \ref{thm1}. In
\S \ref{proofcor} we prove  Corollary \ref{cor1}, and Corollary \ref{cor2}.

\subsection{Preliminaries} \label{preliminaries}
Let $a < b \in \mathbb{R}$ and $n \in \mathbb{Z}_{> 0}$ be fixed, and
suppose that $f : [a,b] \rightarrow \mathbb{R}$, and $x_1 < \cdots < x_n \in
[a,b]$ are given. The interpolating polynomial $P$ of the function $f$ at
$x_1,\ldots,x_n$ is the unique polynomial of degree at most $n-1$ such that $$
P(x_j) = f(x_j), \quad \text{for} \quad j = 1,\ldots,n.
$$
This interpolating polynomial $P$ can be explicitly defined by
\begin{equation} \label{P}
P(x) = \sum_{j=1}^n f(x_j) q_{j}(x),
\end{equation}
where $q_j$ is the nodal polynomial for $x_j$, that is,
\begin{equation} \label{nodal}
q_{j}(x) = \prod_{k = 1,k \not = j}^n \frac{x - x_k}{x_j - x_k}.
\end{equation}
We say $x_1,\ldots,x_n$ are Chebyshev nodes for the interval $[a,b]$ if
\begin{equation} \label{nodes}
x_j = \frac{b+a}{2} + \frac{b-a}{2} \cos \left( \pi  \frac{j - \frac{1}{2}}{ n
} \right), \quad \text{for} \quad j = 1,\ldots,n.
\end{equation}
The following lemma is a classical result in approximation theory. It says that
a smooth function on a compact interval is accurately approximated by the
interpolating polynomial of the function at Chebyshev nodes, see for example \S
4.5.2 of Dahlquist and Bj\"orck \cite{DahlquistBjorck1974}.

\begin{lemma} \label{lem1}
Let $f \in C^{n}([a,b])$, and $x_1,\ldots,x_n$ be Chebyshev nodes for $[a,b]$.
If $P$ is the interpolating polynomial for $f$ at $x_1,\ldots,x_n$, then
$$
\sup_{x \in [a,b]} |f(x) - P(x)| \le \frac{2 M}{n !} \left( \frac{b-a}{4}
\right)^{n},
$$
where 
$$
M = \sup_{x \in [a,b]} |f^{(n)}(x)|.
$$
\end{lemma}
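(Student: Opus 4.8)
The plan is to combine the classical pointwise error formula for polynomial interpolation with the extremal property of the Chebyshev nodes. First I would recall (or reprove by a Rolle's theorem argument) that for any $f \in C^n([a,b])$ with interpolating polynomial $P$ of degree at most $n-1$ at distinct nodes $x_1,\ldots,x_n$, and for each fixed $x \in [a,b]$, there exists a point $\xi = \xi(x) \in (a,b)$ such that
$$
f(x) - P(x) = \frac{f^{(n)}(\xi)}{n!} \prod_{k=1}^n (x - x_k).
$$
The standard proof of this identity: if $x$ coincides with a node both sides vanish; otherwise fix such an $x$, set $\phi(t) = f(t) - P(t) - C \prod_{k=1}^n (t - x_k)$ with the constant $C$ chosen so that $\phi(x) = 0$. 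Then $\phi$ vanishes at the $n+1$ distinct points $x, x_1, \ldots, x_n$, so repeated application of Rolle's theorem gives a point $\xi$ where $\phi^{(n)}(\xi) = 0$, which forces $C = f^{(n)}(\xi)/n!$ since $P$ has degree at most $n-1$ and $\prod_k (t-x_k)$ is monic of degree $n$.

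Next I would bound the two factors appearing in the error formula. The bound $|f^{(n)}(\xi)| \le M$ is immediate from the definition of $M$. For the nodal polynomial I would use the affine change of variables $x = \frac{b+a}{2} + \frac{b-a}{2} t$ carrying $[-1,1]$ onto $[a,b]$, under which, by \eqref{nodes}, the node $x_j$ corresponds to the $j$-th Chebyshev point $t_j = \cos\!\big(\pi(j-\tfrac12)/n\big)$ of $[-1,1]$. Since $\prod_{k=1}^n (t - t_k)$ is the monic polynomial whose roots are precisely the roots of the Chebyshev polynomial $T_n$, and $T_n$ has leading coefficient $2^{n-1}$, we have $\prod_{k=1}^n (t - t_k) = 2^{1-n} T_n(t)$; as $|T_n(t)| \le 1$ for $t \in [-1,1]$, this gives $\big|\prod_{k=1}^n (t - t_k)\big| \le 2^{1-n}$ there. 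Scaling back yields
$$
\left| \prod_{k=1}^n (x - x_k) \right| = \left( \frac{b-a}{2} \right)^{\!n} \left| \prod_{k=1}^n (t - t_k) \right| \le \left( \frac{b-a}{2} \right)^{\!n} 2^{1-n} = 2 \left( \frac{b-a}{4} \right)^{\!n}
$$
for every $x \in [a,b]$.

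Finally, substituting the two bounds $|f^{(n)}(\xi)| \le M$ and $\big|\prod_{k=1}^n(x-x_k)\big| \le 2(b-a)^n/4^n$ into the error formula and taking the supremum over $x \in [a,b]$ produces the claimed estimate $\sup_{x\in[a,b]} |f(x)-P(x)| \le \frac{2M}{n!}\big(\frac{b-a}{4}\big)^n$. I expect the only mildly delicate point to be recalling the normalization identity $\prod_{k=1}^n (t - t_k) = 2^{1-n} T_n(t)$ together with the leading coefficient of $T_n$; the interpolation error formula and the change of variables are routine. Since this is a classical textbook result, it could alternatively simply be cited, but the argument above is self-contained.
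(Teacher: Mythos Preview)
Your proof is correct and follows the standard textbook argument. Note that the paper does not actually prove this lemma: it states it as a classical result in approximation theory and simply cites \S 4.5.2 of Dahlquist and Bj\"orck. Your self-contained argument---the Rolle's theorem derivation of the interpolation remainder formula together with the extremal property $\prod_{k=1}^n(t-t_k)=2^{1-n}T_n(t)$ and the affine rescaling to $[a,b]$---is exactly the proof one finds in that reference, so there is no substantive difference in approach, only that you have written out what the paper leaves to citation.
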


In addition to Lemma \ref{lem1}, we require a result about the existence of
generalized Gaussian quadratures for Chebyshev systems. In 1866, Gauss
\cite{Gauss1866} established the existence of quadrature nodes $x_1,\ldots,x_n$
and weights $w_1,\ldots,w_n$ for an interval $[a,b]$ such that
$$
\int_a^b f(x) dx = \sum_{j=1}^n w_j f(x_j),
$$
whenever $f(x)$ is a polynomial of degree at most $2n - 1$. This result was
generalized from polynomials to Chebyshev systems by Kre\u{\i}n
\cite{MR0113106}. A collection of functions $f_0,\ldots,f_n$ on $[a,b]$ is a
Chebyshev system if every nonzero generalized polynomial 
$$
g(t) = a_0 f_0(t) + \cdots + a_n f_n(t), \quad \text{for} \quad a_0,\ldots,a_n
\in \mathbb{R},
$$
 has at most $n$ distinct zeros in $[a,b]$.  The following result of Kre\u{\i}n
says that any function in the span of a Chebyshev system of $2n$ functions can
be integrated exactly by a quadrature with $n$ nodes and $n$ weights.

\begin{lemma}[Kre\u{\i}n \cite{MR0113106}] \label{krein}
Let $f_0,\ldots,f_{2n-1}$ be a Chebyshev system of continuous functions on
$[a,b]$, and $w : (a,b) \rightarrow \mathbb{R}$ be a continuous positive weight
function. Then, there exists unique nodes $x_1,\ldots,x_n$ and weights
$w_1,\ldots,w_n$ such that 
$$
\int_a^b f(x) w(x) dx = \sum_{j=1}^n w_j f(x_j),
$$
whenever $f$ is in the span of $f_0,\ldots,f_{2n-1}$.
\end{lemma}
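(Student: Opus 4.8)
The plan is to follow the strategy behind classical Gaussian quadrature, in which a rule with $n$ nodes and $n$ weights (so $2n$ free parameters) is matched against the $2n$-dimensional space $V:=\spann(f_0,\dots,f_{2n-1})$. The structural fact about Chebyshev systems that I would use repeatedly is the \emph{Haar property}: any collection of at most $2n$ point evaluations at distinct points of $[a,b]$ is linearly independent on $V$ — otherwise some nonzero generalized polynomial would vanish at $2n$ points, contradicting the definition — and, as a consequence, evaluation at any $n$ distinct points is a surjection $V\to\mathbb{R}^n$. So fix candidate nodes $x_1<\dots<x_n$ in $(a,b)$ and let $K:=\{g\in V:g(x_1)=\dots=g(x_n)=0\}$, which by Haar has dimension exactly $n$. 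Choosing any complement $U$ of $K$ in $V$, evaluation at the nodes restricts to an isomorphism $U\to\mathbb{R}^n$, so there are unique weights $w_1,\dots,w_n$ making the quadrature exact on $U$; and since $V=U\oplus K$, the quadrature is then exact on all of $V$ if and only if $\int_a^b g(x)w(x)\,dx=0$ for every $g\in K$. Thus the whole problem reduces to choosing the $n$ nodes so that the functional $L:f\mapsto\int_a^b f w\,dx$ annihilates the resulting space $K$.

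In the classical polynomial case ($f_k=x^k$) this is done by taking the nodes to be the $n$ zeros of the degree-$n$ orthogonal polynomial $p_n$: then $K=p_n\cdot\{\text{polynomials of degree}<n\}$, and $L$ annihilates $K$ because $p_n$ is $w$-orthogonal to that space (equivalently, because every $f\in V$ decomposes as $f=p_n q+r$ with $\deg q,\deg r<n$). For a general Chebyshev system there is no canonical ``low-order'' subspace playing the role of $\{\deg<n\}$ and, more seriously, no analogue of polynomial division, so one cannot simply imitate this construction; producing the nodes is the main obstacle.

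To get past it I would invoke the theory of moment spaces for Chebyshev systems (Kre\u{\i}n \cite{MR0113106}, and the classical literature that grew out of it). Because $w>0$, the functional $L$ corresponds to an interior point of the moment cone $\{(\int f_0\,d\nu,\dots,\int f_{2n-1}\,d\nu):\nu\ge 0\}$ of the system $(f_0,\dots,f_{2n-1})$. Since this system consists of an even number $2n$ of functions, an interior point of its moment cone possesses a unique representation supported on exactly $n$ points of the open interval $(a,b)$ with positive masses — the so-called lower principal representation — matching all $2n$ moments $\int_a^b f_k w\,dx$. Written out, this measure is a quadrature rule $\sum_{j=1}^n w_j\,\delta_{x_j}$ with $x_j\in(a,b)$ and $w_j>0$ that is exact on $V$; taking its support as the nodes we have $L=\sum_{j=1}^n w_j\,\mathrm{ev}_{x_j}$ on $V$, so $L$ annihilates the corresponding $K$ and the construction of the first paragraph goes through (with the bonus that the weights are positive).

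For uniqueness I would argue purely by linear algebra: if $(x_j,w_j)_{j=1}^n$ and $(y_j,v_j)_{j=1}^n$ are two rules exact on $V$, their difference $\sum_j w_j\delta_{x_j}-\sum_j v_j\delta_{y_j}$ is a signed combination of at most $2n$ point evaluations at distinct points that annihilates the $2n$-dimensional space $V$; by the Haar property such evaluations are linearly independent on $V$, so the combination is trivial, which forces the two node sets and the corresponding weights to coincide. I expect essentially all of the difficulty to be concentrated in the node construction of the third paragraph — the moment-space input — while the remaining steps are the Haar property and elementary linear algebra.
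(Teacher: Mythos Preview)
The paper does not prove this lemma at all: it is stated in the preliminaries as a classical result of Kre\u{\i}n, with a citation to \cite{MR0113106}, and is used only as a black box in the proof of Lemma~\ref{quadlem}. So there is no ``paper's own proof'' to compare against.

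Your sketch is a correct outline of how the result is obtained. The reduction in your first paragraph (fix nodes, solve for weights on a complement, and observe that exactness on all of $V$ is equivalent to $L$ vanishing on the $n$-dimensional kernel $K$) is right, and your uniqueness argument via the Haar property is clean and correct: evaluations at any $m\le 2n$ distinct points are linearly independent as functionals on the $2n$-dimensional space $V$, so a signed combination of at most $2n$ point masses that annihilates $V$ must be trivial. (Strictly speaking this gives uniqueness among $n$-point rules with all weights nonzero; if a weight were zero the corresponding node could be moved arbitrarily. The positivity you obtain from the principal representation closes this.)

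The one thing to flag is that your existence step is close to circular. The statement that an interior point of the moment cone of a Chebyshev system of $2n$ functions has a unique lower principal representation supported on exactly $n$ interior points with positive masses \emph{is} Kre\u{\i}n's theorem --- it is precisely the content of the lemma, drawn from the same source \cite{MR0113106} (see also \cite{MR0204922}). So your third paragraph is not so much a proof as a restatement in the language of moment spaces. A self-contained argument would have to actually construct the principal representation --- e.g.\ via a variational characterization (minimize $\int f_0\,d\nu$ over nonnegative measures matching the remaining $2n-1$ moments) together with the zero-counting properties of Chebyshev systems --- which is where the real work lies, as you yourself anticipate in your last sentence.
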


\subsection{Technical Lemmas} \label{tech}
In this section, we state and prove two technical lemmas that are
involved in the proof of Theorem \ref{thm1}. We remark that a similar version of
Lemma \ref{lem2} appears in \cite{Rokhlin1988}.

\begin{lemma} \label{lem2}
Fix $a > 0$ and $t \in [0,\infty)$, and let $r_1,\ldots,r_n$ be Chebyshev nodes
for $[a,2 a]$.  If $P_{t}(r)$ is the interpolating polynomial for $e^{-r t}$
at $r_1,\ldots,r_n$, then
$$
\sup_{r \in [a,2 a]} \left| e^{-r t} - P_{t}(r) \right|\le \frac{1}{4^n}.
$$
\end{lemma}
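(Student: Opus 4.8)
The plan is to deduce Lemma \ref{lem2} directly from the classical interpolation bound of Lemma \ref{lem1}, applied to the function $f(r) = e^{-rt}$ on the interval $[a,2a]$, and then to absorb the resulting derivative factor into the clean constant $1/4^n$ by a short self-contained estimate on the Poisson-type quantity $(at)^n e^{-at}/n!$.

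First I would invoke Lemma \ref{lem1} with $f(r) = e^{-rt}$, the interval $[a,2a]$, and the given Chebyshev nodes $r_1,\dots,r_n$, whose interpolating polynomial is exactly $P_t$. Here $b-a = a$, and since $f^{(n)}(r) = (-t)^n e^{-rt}$ we have $|f^{(n)}(r)| = t^n e^{-rt}$, a nonincreasing function of $r$ for $t \ge 0$; hence $M := \sup_{r \in [a,2a]} |f^{(n)}(r)| = t^n e^{-at}$. Lemma \ref{lem1} then gives
\[
\sup_{r \in [a,2a]} \left| e^{-rt} - P_t(r) \right| \;\le\; \frac{2 t^n e^{-at}}{n!}\left(\frac{a}{4}\right)^{n} \;=\; \frac{2}{4^n}\cdot\frac{(at)^n e^{-at}}{n!}.
\]
It therefore remains to show that $(at)^n e^{-at}/n! \le 1/2$ for all $a>0$, $t\ge 0$, and $n\ge 1$. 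Setting $s = at \ge 0$, the function $s \mapsto s^n e^{-s}$ has derivative $s^{n-1} e^{-s}(n-s)$, so it is maximized at $s=n$ with value $n^n e^{-n}$; thus it suffices to check $n^n e^{-n}/n! \le 1/2$. This follows by keeping two terms of the exponential series,
\[
e^{n} \;=\; \sum_{k\ge 0} \frac{n^k}{k!} \;\ge\; \frac{n^n}{n!} + \frac{n^{n-1}}{(n-1)!} \;=\; \frac{2n^n}{n!},
\]
where we used $n^{n-1}/(n-1)! = n^n/n!$. Combining this with the displayed bound yields $\sup_{r\in[a,2a]}|e^{-rt}-P_t(r)| \le (2/4^n)\cdot(1/2) = 1/4^n$, which is the claim; the degenerate case $t=0$ is immediate since then $e^{-rt}\equiv 1 \equiv P_t$.

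I do not expect a real obstacle: the argument is a direct specialization of Lemma \ref{lem1} together with elementary calculus. The only point that needs any care is squeezing out the clean constant $1/4^n$ (rather than $2/4^n$), and this is precisely what the two-term lower bound for $e^n$ delivers, letting us avoid any appeal to Stirling's formula with its explicit numerical constants.
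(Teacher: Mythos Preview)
Your proof is correct and follows essentially the same route as the paper: apply Lemma~\ref{lem1} to $f(r)=e^{-rt}$ on $[a,2a]$, observe that $M=t^n e^{-at}$, maximize over $t$ to reduce to the inequality $2n^n e^{-n}\le n!$, and then verify that inequality. The only cosmetic difference is in this last step: the paper bounds $\sum_{j=1}^n \ln j \ge \int_1^n \ln x\,dx = n\ln n - n + 1$ to obtain $e\,n^n e^{-n}\le n!$, whereas you keep two terms of the series for $e^n$ to get the slightly weaker but sufficient $2n^n e^{-n}\le n!$.
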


\begin{proof}
We have
$$
\sup_{r \in [a,2 a]} \left| \frac{\partial^n}{\partial r^n} e^{-r t} \right| =
\sup_{r \in [a,2 a]} |t^n e^{-r t}| = t^n e^{-t a}.
$$
By writing the derivative of $t^n e^{-t a}$ as 
$$
\frac{d}{d t} t^n e^{-t a} = \left( \frac{n}{a} -t \right) a
t^{n-1} e^{-a t},
$$
we can deduce that the maximum of $t^n e^{-t a}$ occurs at $t = n/a$, that is,
\begin{equation} \label{maxna}
\sup_{t \in [0,\infty)} t^n e^{-t a} = \left( \frac{n}{a} \right)^n e^{-a(n/a)}.
\end{equation}
By \eqref{maxna} and the result of Lemma \ref{lem1}, we conclude that
$$
\sup_{t \in [a,2a]} |e^{-r t} - P_t(r)| \le \frac{2 (n/a)^n e^{-a(n/a)}}{n !} \left(
\frac{a}{4} \right)^{n} = \frac{2 n^n e^{-n}}{n!} \frac{1}{4^n}.
$$
It remains to show that $2 n^n e^{-n} \le n!$. Since $\ln(x)$ is a increasing
function, we have
$$
n \ln n - n  + 1 = \int_1^n \ln(x) dx \le \int_1^n \sum_{j=1}^{n-1}
\chi_{[j,j+1]}(x) \ln(j+1) dx = \sum_{j=1}^n \ln(j).
$$
Exponentiating both sides of this inequality gives $e n^n e^{-n} \le n!$, which
is a classical inequality related to Stirling's approximation. This completes
the proof.
\end{proof}

\begin{lemma} \label{approx}
Suppose that $\varepsilon > 0$ and $M > 1$ are given. Then, there exists 
$$
m = \mathcal{O}(\log(M) \log(\varepsilon^{-1}))
$$ 
values $r_1,\ldots,r_m \in [1,M]$ such that for all $r \in [1,M]$ we have
\begin{equation} \label{approxeq}
\sup_{t \in [0,\infty)} \left| e^{-r t} - \sum_{j=1}^m c_j(r)
e^{-r_j t} \right| \le \varepsilon,
\end{equation}
for some choice of coefficients $c_j(r)$ that depend on $r$.
\end{lemma}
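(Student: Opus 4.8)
The plan is to combine a dyadic decomposition of the interval $[1,M]$ with the interpolation estimate of Lemma \ref{lem2}. The crucial observation is that the interpolating polynomial $P_t(r)$ of $e^{-rt}$ at a fixed set of nodes $r_1,\ldots,r_n$ has, by \eqref{P} and \eqref{nodal}, the explicit form $P_t(r) = \sum_{k=1}^n q_k(r)\,e^{-r_k t}$, where $q_k$ is the nodal polynomial for $r_k$; the coefficients $q_k(r)$ depend on $r$ but not on $t$. Hence $P_t(r)$ is exactly an exponential sum of the type appearing in \eqref{approxeq} with $c_k(r) = q_k(r)$, and Lemma \ref{lem2} bounds $\sup_{t \ge 0} |e^{-rt} - P_t(r)|$ uniformly over intervals of the form $[a,2a]$.

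First I would fix $n$ to be the least integer with $4^{-n} \le \varepsilon$, so that $n = \mathcal{O}(\log(\varepsilon^{-1}))$. Next I would cover $[1,M]$ by $L = \mathcal{O}(\log M)$ intervals $I_1,\ldots,I_L$, each of the form $[a_\ell, 2a_\ell]$ with $1 \le a_\ell$ and $2a_\ell \le M$ --- for instance the dyadic intervals $[2^{\ell-1},2^\ell]$ with the rightmost one replaced by $[M/2,M]$ so that every chosen node lands in $[1,M]$; the edge case $M < 2$ can be handled by the single interval $[1,M]$ together with Lemma \ref{lem1}. On each $I_\ell$ I would let $r^{(\ell)}_1,\ldots,r^{(\ell)}_n$ be its $n$ Chebyshev nodes, and take $r_1,\ldots,r_m$ to be the union of all these nodes, so that $m = nL = \mathcal{O}(\log M \,\log(\varepsilon^{-1}))$, as required.

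To verify \eqref{approxeq}, given $r \in [1,M]$ I would choose $\ell$ with $r \in I_\ell$, let $P_t$ be the interpolating polynomial of $e^{-rt}$ at the Chebyshev nodes of $I_\ell$, and set $c_j(r) := q_j(r)$ when $r_j$ is a node of $I_\ell$ and $c_j(r) := 0$ otherwise. Then $\sum_{j=1}^m c_j(r) e^{-r_j t} = P_t(r)$, and applying Lemma \ref{lem2} on $I_\ell = [a_\ell, 2a_\ell]$ gives $\sup_{t \ge 0} |e^{-rt} - P_t(r)| \le 4^{-n} \le \varepsilon$, which is the desired bound.

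I expect the only genuinely new point to be the observation in the first paragraph --- recognizing that polynomial interpolation in the variable $r$ is the same thing as a fixed-node exponential sum whose error is uniform in $t \in [0,\infty)$ --- after which the construction is a routine dyadic covering argument. The remaining care is purely bookkeeping: choosing the covering so that the node count is $\mathcal{O}(\log M \,\log(\varepsilon^{-1}))$ and all nodes stay inside $[1,M]$.
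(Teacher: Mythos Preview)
Your proposal is correct and follows essentially the same argument as the paper: a dyadic covering of $[1,M]$ by intervals $[2^{i},2^{i+1}]$, Chebyshev interpolation of $r\mapsto e^{-rt}$ with $n=\mathcal{O}(\log(\varepsilon^{-1}))$ nodes on each piece, and an appeal to Lemma~\ref{lem2} to obtain the uniform-in-$t$ bound $4^{-n}\le\varepsilon$. Your handling of the right endpoint (replacing the last dyadic interval by $[M/2,M]$ so that all nodes lie in $[1,M]$) is in fact slightly more careful than the paper's construction, which as written may place a few nodes beyond $M$ when $M$ is not a power of two.
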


\begin{proof}
We construct an explicit set of $m := (\lfloor \log_2 M \rfloor +1) ( \lfloor
\log_4 \varepsilon^{-1}  \rfloor + 1)$ points and coefficients such that
\eqref{approxeq} holds. Set $n := \lfloor \log_4 \varepsilon^{-1} \rfloor +1$.
We define the points $r_1,\ldots,r_m$ by
\begin{equation} \label{nodefin}
r_{i n +k} := 2^{i-1} \left( 3 + \cos \left( \pi \frac{k - \frac{1}{2}}{n}
\right) \right), 
\end{equation}
for $k = 1,\ldots,n$ and $i = 0,\ldots,\lfloor \log_2 M \rfloor$, and define
the coefficients $c_1(r),\ldots,c_m(r)$ by
\begin{equation} \label{coeff}
c_{i n +k}(r) := \chi_{[2^{i},2^{i+1})}(r)  \prod_{l=1,l \not = k}^{\lfloor
\log_4 \varepsilon^{-1} \rfloor} \frac{r - r_{i n+l}}{r_{i n + l} -r_{i n
+ k}},
\end{equation}
for $k = 1,\ldots,n$ and $i = 0,\ldots,\lfloor \log_2 M \rfloor$. We claim that
$$
\sup_{r \in [1,M]} \sup_{t \in [0,\infty)} \left| e^{-r t} - \sum_{j=1}^m c_j(r)
e^{-r_j t} \right| \le \varepsilon.
$$ 
Indeed, fix $r \in [1,M]$, and let $i_0 \in \{0,\ldots,\lfloor \log_2 M
\rfloor\}$ be the unique integer such that $r \in [2^{i_0},2^{i_0+1})$. By
the definition of the coefficients, see \eqref{coeff}, we have
$$
\sum_{j=1}^m c_j(r) e^{-r_j t} = \sum_{k=1}^n e^{-r_{i_0 n +k} t} \prod_{l=1,l
\not = k}^{\lfloor \log_4 \varepsilon^{-1} \rfloor} \frac{r - r_{i_0
n+l}}{r_{i_0 n + l} -r_{i_0 n + k}}.
$$
We claim that the right hand side of this equation  is the interpolating
polynomial $P_{t,i_0}(r)$ for $e^{-r t}$ at $r_{i_0 n + k},\ldots,r_{(i_0+1)n}$,
that is,
$$
\sum_{k=1}^n e^{-r_{i_0 n +k} t} \prod_{l=1,l \not = k}^{\lfloor \log_4
\varepsilon^{-1} \rfloor} \frac{r - r_{i_0 n+l}}{r_{i_0 n + l} -r_{i_0 n + k}} =
P_{t,i_0}(r).
$$
Indeed, see \eqref{P} and \eqref{nodal}. Since the points $r_{i_0 n +
k},\ldots,r_{(i_0+1)n}$ are Chebyshev nodes for the interval
$[2^{i_0},2^{i_0+1}]$, and since $i_0$ was chosen such that $r \in
[2^{i_0},2^{i_0+1})$, it follows from Lemma \ref{lem2} that
$$
\left| e^{-r t} - P_{t,i_0}(r) \right|\le \frac{1}{4^n}
\quad \text{for} \quad t \in [0,\infty).
$$
Since $n = \lfloor \log_4 \varepsilon^{-1} \rfloor
+1$ the proof is complete.
\end{proof}

\begin{remark}
The proof of Lemma \ref{approx} has the additional consequence that the
coefficients $c_1(r),\ldots,c_m(r)$ in \eqref{approxeq} can be chosen such that
they satisfy
$$
|c_j(r)| \le \sqrt{2} \quad \text{for} \quad j=1,\ldots,m.
$$
Indeed, in \eqref{coeff} the coefficients $c_j(r)$  are either equal zero or
equal to the nodal polynomial, see \eqref{nodal}, for Chebyshev nodes on an
interval that contains $r$. The nodal polynomials for Chebyshev nodes on an
interval $[a,b]$ are bounded by $\sqrt{2}$ on $[a,b]$, see for example
\cite{Rokhlin1988}.  The fact that $e^{-r t}$ can be approximated as a linear
combination of functions $e^{-r_1 t},\ldots,e^{-r_m t}$
with small coefficients means that the approximation of Lemma \ref{approx} can
be used in finite precision environments without any unexpected catastrophic
cancellation.
\end{remark}

\subsection{Completing the proof of Theorem \ref{thm1}} \label{completeproof}
Previously in \S \ref{informal}, we proved that the algorithm of \S
\ref{algomain} involves $\mathcal{O}( n m + N_\delta)$ operations. To complete
the proof of Theorem \ref{thm1} it remains to show that there exists
$$
m = \mathcal{O}( \log( \varepsilon^{-1} ) 
\log( \delta^{-1} ))
$$
points $t_1,\ldots,t_m$ and weights $w_1,\ldots,w_m$ that satisfy \eqref{quad};
we show the existence of such nodes and weights in the following lemma, and thus
complete the proof of Theorem \ref{thm1}. The computation of such nodes and
weights is described in \S \ref{nodesandweights}.

\begin{lemma} \label{quadlem}
Fix $a < b \in \mathbb{R}$, and let $\delta > 0$ and $\varepsilon > 0$ be given.
Then, there exists $m = \mathcal{O}( \log(\varepsilon^{-1}) \log(\delta^{-1}))$
nodes $t_1,\ldots,t_m$ and weights $w_1,\ldots,w_m$ such
that
\begin{equation} \label{star}
\left| \frac{1}{r} - \sum_{j=1}^m  w_j e^{-r t_j} \right| \le \varepsilon,
\quad 
\text{for} \quad r \in [\delta (b-a), b-a].
\end{equation}
\end{lemma}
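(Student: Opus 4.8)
The plan is to discretize the identity $1/r = \int_0^\infty e^{-rt}\,dt$ by a generalized Gaussian quadrature whose underlying Chebyshev system is the small family of exponentials supplied by Lemma \ref{approx}. By a linear change of variable in $r$ (which rescales the sought nodes $t_j$ and weights $w_j$ accordingly) it suffices to treat the case $r \in [1,M]$ with $M := \delta^{-1}$, at the price of replacing the target accuracy $\varepsilon$ by $\tilde\varepsilon := \varepsilon\,\delta(b-a)$; so the goal becomes to produce $m$ nodes and weights with $|1/r - \sum_j w_j e^{-rt_j}| \le \tilde\varepsilon$ for $r \in [1,M]$.

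First I would truncate the integral: since $\int_T^\infty e^{-rt}\,dt = e^{-rT}/r \le e^{-T}$ for all $r \ge 1$, the choice $T := \log(2/\tilde\varepsilon)$ gives $|1/r - \int_0^T e^{-rt}\,dt| \le \tilde\varepsilon/2$ uniformly for $r \in [1,M]$, reducing the problem to approximating $\int_0^T e^{-rt}\,dt$. Next I would apply Lemma \ref{approx} with this $M$ and an auxiliary tolerance $\varepsilon'$ (fixed at the end) to obtain exponents $\tau_1,\dots,\tau_{m_0} \in [1,M]$, with $m_0 = \mathcal{O}(\log M \cdot \log(1/\varepsilon'))$, such that every function $t \mapsto e^{-rt}$ with $r \in [1,M]$ lies within $\varepsilon'$ in the supremum norm on $[0,\infty)$ of $\spann\{e^{-\tau_1 t},\dots,e^{-\tau_{m_0} t}\}$. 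I would then adjoin the constant $1 = e^{0 \cdot t}$ (and, if necessary, one extra exponential to make the total number of functions even): the resulting family is a Chebyshev system of continuous functions on $[0,T]$, using the classical fact that a nontrivial real exponential sum with $N+1$ distinct exponents has at most $N$ real zeros. Lemma \ref{krein}, applied with weight function $\equiv 1$ on $[0,T]$, then yields $n = \mathcal{O}(m_0)$ nodes $t_1,\dots,t_n \in [0,T]$ and positive weights $w_1,\dots,w_n$ that are exact on the span; in particular, exactness on the constant gives $\sum_k w_k = \int_0^T 1\,dt = T$.

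For the error analysis I would, for each fixed $r \in [1,M]$, write $e^{-rt} = p_r(t) + \rho_r(t)$ with $p_r$ in the span (so $\int_0^T p_r = \sum_k w_k p_r(t_k)$ by exactness) and $\|\rho_r\|_\infty \le \varepsilon'$, and estimate
\[
\left| \int_0^T e^{-rt}\,dt - \sum_k w_k e^{-rt_k} \right| = \left| \int_0^T \rho_r(t)\,dt - \sum_k w_k \rho_r(t_k) \right| \le T\varepsilon' + \Bigl(\sum_k w_k\Bigr)\varepsilon' = 2T\varepsilon',
\]
where the final step uses positivity of the $w_k$. Taking $\varepsilon' := \tilde\varepsilon/(4T)$ and combining with the truncation bound yields $|1/r - \sum_k w_k e^{-rt_k}| \le \tilde\varepsilon$, which after undoing the rescaling is \eqref{star}. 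The count is then $m = n = \mathcal{O}(m_0) = \mathcal{O}(\log(\delta^{-1}) \cdot \log(1/\varepsilon'))$, and tracking the merely logarithmic dependence of $\varepsilon'$ on $\varepsilon$, $\delta$, $b-a$ and $T$ gives the asserted order $\mathcal{O}(\log(\delta^{-1})\log(\varepsilon^{-1}))$.

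The step I expect to be the real obstacle is the amplification in the displayed inequality: a quadrature that is merely exact on the $\tau_j$-span could have $\sum_k |w_k|$ vastly larger than $T$, which would ruin the bound. This is precisely why the argument must pass through the \emph{generalized Gaussian} quadrature of Kre\u{\i}n rather than an arbitrary interpolatory rule --- the positivity of its weights makes $\sum_k |w_k| = \sum_k w_k = T$ automatic. A subsidiary point requiring care is that the constant function must be included in the Chebyshev system (so that Lemma \ref{krein} controls $\sum_k w_k$) while still leaving the enlarged system Chebyshev, and that Lemma \ref{approx} is invoked with the rescaled tolerance $\varepsilon'$ rather than with $\varepsilon$ itself.
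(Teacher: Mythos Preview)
Your argument is essentially the paper's own: rescale to $r\in[1,\delta^{-1}]$, truncate the Laplace integral at $T=\log(2/\varepsilon)$, invoke Lemma~\ref{approx} to produce a finite exponential family whose span uniformly approximates every $e^{-rt}$, and apply Kre\u{\i}n's generalized Gaussian quadrature (Lemma~\ref{krein}) to that family on $[0,T]$. Your one addition---adjoining the constant $1$ to the Chebyshev system so that exactness together with positivity of the Gaussian weights gives $\sum_k w_k=T$ and hence the clean $2T\varepsilon'$ bound---is actually more careful than the paper, which simply asserts the corresponding quadrature-error estimate \eqref{s3} without this device.
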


\begin{proof} 
Fix $a < b \in \mathbb{R}$, and let $\delta, \varepsilon >0$  be given. By the
possibility of rescaling $r$, $w_j$, and $t_j$, we may assume that $b-a =
\delta^{-1}$ such that we want to establish \eqref{star} for $r \in
[1,\delta^{-1}]$.  By Lemma \ref{approx} we can choose $2 m = \mathcal{O} (
\log(\varepsilon^{-1}) \log(\delta^{-1}))$ points $r_0,\ldots,r_{2m-1} \in
[1,\delta^{-1}]$, and coefficients $c_0(r),\ldots,c_{2m-1}(r)$ depending on $r$
such that
\begin{equation} \label{erri}
\sup_{r \in [1,\delta^{-1}]} \sup_{t \in [0,\infty)} \left| e^{-r t} -
\sum_{j=0}^{2m - 1} c_j(r) e^{-r_j t} \right| \le \frac{\varepsilon}{2
\log(2\varepsilon^{-1})}.
\end{equation}
The collection of functions $e^{-r_0 t},\ldots,e^{-r_{2m-1} t}$ form a
Chebyshev system of continuous functions on the interval $[0,\log(2
\varepsilon^{-1})]$, see for example \cite{MR0204922}.  Thus, by Lemma
\ref{krein} there exists $m$ quadrature nodes $t_1,\ldots,t_m$ and weights
$w_1,\ldots,w_m$ such that
$$
\int_0^{\log(2 \varepsilon^{-1})} f(t) dt = \sum_{j=1}^m w_j f(t_j),
$$
whenever $f(t)$ is in the span of $e^{-r_0 t},\ldots,e^{-r_{2m-1} t}$. By the
triangle inequality
\begin{multline} \label{s1}
\left| \frac{1}{r} - \sum_{j=1}^m w_j e^{-r t_j} \right| \\ \le 
\left| \frac{1}{r} - \int_0^{ \log(2 \varepsilon^{-1})} e^{-r t} dt \right|
+ \left| \int_0^{\log(2 \varepsilon^{-1})} e^{-r t} dt - \sum_{j=1}^m w_j e^{r t_j}
\right|.
\end{multline}
Recall that we have assumed $r \in [1,\delta^{-1}]$, in particular, $r \ge 1$ so
it follows that
\begin{equation} \label{s2}
\left| \frac{1}{r} - \int_0^{ \log(2 \varepsilon^{-1})} e^{-r t} dt \right| \le
\varepsilon/2.
\end{equation}
By \eqref{erri}, the function $e^{-r t}$ can be approximated to error
$\varepsilon/(2 \log(2 \varepsilon^{-1}))$ in the $L^\infty$-norm on $[0,\log(2
\varepsilon^{-1})]$ by functions in the span of $e^{-r_0 t},\ldots,e^{-r_{2m-1}
t}$. Since our quadrature is exact for these functions, we conclude that
\begin{equation} \label{s3}
\left| \int_0^{\log(2 \varepsilon^{-1})} e^{-r t}dt - \sum_{j=1}^m w_j e^{r t_j}
\right| \le \varepsilon/2.
\end{equation}
Combining \eqref{s1}, \eqref{s2}, and \eqref{s3} completes the
proof.
\end{proof}

\subsection{Proof of Corollary \ref{cor1}} \label{proofcor}
In this section, we prove Corollary \ref{cor1}, which states that the algorithm
of \S \ref{algomain} involves $\mathcal{O}(n \log n)$ operations when
$x_1,\ldots,x_n$ are Chebyshev nodes, $\varepsilon = 10^{-15}$, and $\delta =
1/n$.

\begin{proof}[Proof of Corollary \ref{cor1}]
By rescaling the problem we may assume that $[a,b] = [-1,1]$
such that the Chebyshev nodes $x_1,\ldots,x_n$ are given by 
$$
x_j = \cos \left( \pi \frac{j - \frac{1}{2}}{n } \right), \quad \text{for}
\quad j = 1,\ldots,n.
$$
By the result of Theorem \ref{thm1}, it suffices to show that $N_\delta = \mathcal{O}(n
\log n)$, where 
$$
N_\delta := \sum_{j=1}^n \# \left\{ x_i : |x_j - x_i| < \frac{1}{n} \right\}.
$$
It is straightforward to verify that the number of Chebyshev nodes within an
interval of radius $1/n$ around the point $-1 < x < 1$ is
$\mathcal{O}(1/\sqrt{1-x^2})$, that is,
$$
\# \left\{ x_i : |x - x_i| < \frac{1}{n} \right\} = \mathcal{O} \left(
\frac{1}{\sqrt{1 - x^2}} \right), \quad \text{for} \quad -1 < x < 1.
$$
This estimate, together with the fact that the first and last Chebyshev node are
distance at least $1/n^2$ from $1$ and $-1$, respectively, gives the estimate
\begin{equation} \label{estc}
\sum_{j=1}^n \# \left\{ x_i : |x_j - x_i| < \frac{1}{n} \right\} = \mathcal{O}
\left( \int_{1/n^2}^{\pi-1/n^2} \frac{n}{\sqrt{1-\cos(t)^2}} dt \right).
\end{equation}
Let $\pi/2 > \eta > 0$ be a fixed parameter; direct calculation yields
$$
\int_{\eta}^{\pi-\eta} \frac{1}{\sqrt{1-\cos(t)^2}} dt  = 2 \log \left( \cot
\left( \frac{\eta}{2} \right) \right) = \mathcal{O} \left( \log \left(\eta^{-1}
\right)\right).
$$
Combining this estimate with \eqref{estc} yields $N_\delta = \mathcal{O}(n \log
n)$ as was to be shown.
\end{proof}

\section{Numerical results and implementation details} \label{numerics}

\subsection{Numerical results}  \label{numres}
We report numerical results for two different point distributions:
uniformly random points in $[1,10]$, and Chebyshev nodes in $[-1,1]$. 
In both cases, we choose the weights $\alpha_1,\ldots,\alpha_n$ uniformly at
random from $[0,1]$, and test the algorithm for 
$$
n = 1000 \times 2^k \quad \text{points}, \quad \text{for} \quad k = 0,\ldots,10.
$$
We time two different versions of the algorithm: a standard implementation,
and an implementation that uses precomputed exponentials.  Precomputing
exponentials may be advantageous in situations where the expression
\begin{equation} \label{eqnew}
u_j = \sum_{i=1}^n \frac{\alpha_i}{x_i - x_j}, \quad \text{for} \quad j =
1,\ldots,n,
\end{equation}
must be evaluated for many different weights $\alpha_1,\ldots,\alpha_n$
associated with a fixed set of points $x_1,\ldots,x_n$, see Remark
\ref{precomputation}. We find that using precomputed exponentials makes the
algorithm approximately ten times faster, see Tables \ref{key}, \ref{figrand},
and \ref{figcheb}. In addition to reporting timings, we report the absolute
relative difference between the output of the algorithm of \S \ref{algomain} and
the output of direct evaluation; we define the absolute relative difference
$\epsilon_r$ between the output $\tilde{u}_j$ of the algorithm of \S
\ref{algomain} and the output $u_j^d$ of direct calculation by
\begin{equation} \label{errdef}
\epsilon_r := \sup_{j = 1,\ldots,n} \left| \frac{\tilde{u}_j -
u^d_j}{\bar{u}_j} \right|,
\quad \text{where} \quad
\bar{u}_j := \sum_{i=1}^n \left| \frac{\alpha_i}{x_i - x_j} \right|,
\end{equation}
Dividing by $\bar{u}_j$ accounts were the fact that the calculations are
performed  in finite precision; any remaining loss of accuracy in the numerical
results is a consequence of the large number of addition and multiplication
operations that are performed. 
All calculations are performed in double precision, and the algorithm of \S
\ref{algomain} is run with $\varepsilon = 10^{-15}$. The parameter $\delta > 0$
is set via an empirically determined heuristic.  The numerical experiments were
performed on a laptop with a Intel Core i5-8350U CPU and $7.7$ GiB of memory;
the code was written in Fortran and compiled with gfortran with standard
optimization flags. The results are reported in Tables \ref{key}, \ref{figrand},
and \ref{figcheb}. 

To put the run time of the algorithm in context, we additionally perform a time
comparison to the Fast Fourier Transform (FFT), which also has complexity
$\mathcal{O}(n \log n)$. Specifically, we compare the run time of the algorithm
of \S \ref{algomain} on random data using precomputed exponentials with the run
time of an FFT implementation from FFTPACK \cite{fftpack} on random data of the
same length using precomputed exponentials.  We report these timings in Table
\ref{figfft}; we find that the FFT is roughly $5$-$10$ times faster than our
implementation of the algorithm of \S \ref{algomain}; we remark that no
significant effort was made to optimize our implementation, and that it may be
possible to improve the run time by vectorization. 

\begin{table}[h!]
\begin{tabular}{c|l}
Label & Definition \\
\hline
$n$ & number of points \\
$t_w$ & time of algorithm of \S \ref{algomain} without precomputation in seconds \\
$t_p$ & time of precomputing exponentials for algorithm of \S
\ref{algomain} in seconds \\ 
$t_u$ & time of algorithm of \S \ref{algomain} using precomputed exponentials in seconds \\
$t_d$ & time of direct evaluation in seconds \\
$\epsilon_r $ & maximum absolute relative difference defined in \eqref{errdef} \\
$t_{f}$ & time of FFT using precomputed exponentials (for time comparison only)
\end{tabular}
\vspace{1ex}
\caption{Key for column labels of Tables \ref{figrand}, \ref{figcheb}, and
\ref{figfft}. }
\label{key}
\end{table}
\begin{table}[h!]
\centering
$$
\begin{array}{r|c|c|c|c|c}
  n &  t_w &  t_p  &  t_u  &  t_d  & \epsilon_r \\  
\hline
   1000 &  0.74\E-03 &  0.18\E-02 &  0.93\E-04 &  0.66\E-03 &  0.19\E-14 \\ 
   2000 &  0.19\E-02 &  0.31\E-02 &  0.19\E-03 &  0.25\E-02 &  0.30\E-14 \\ 
   4000 &  0.42\E-02 &  0.61\E-02 &  0.43\E-03 &  0.10\E-01 &  0.52\E-14 \\
   8000 &  0.85\E-02 &  0.10\E-01 &  0.89\E-03 &  0.37\E-01 &  0.72\E-14 \\
  16000 &  0.18\E-01 &  0.25\E-01 &  0.18\E-02 &  0.14\E+00 &  0.92\E-14 \\
  32000 &  0.38\E-01 &  0.49\E-01 &  0.37\E-02 &  0.59\E+00 &  0.19\E-13 \\
  64000 &  0.84\E-01 &  0.98\E-01 &  0.78\E-02 &  0.23\E+01 &  0.21\E-13 \\
 128000 &  0.16\E+00 &  0.19\E+00 &  0.18\E-01 &  0.95\E+01 &  0.35\E-13 \\
 256000 &  0.37\E+00 &  0.53\E+00 &  0.34\E-01 &  0.40\E+02 &  0.59\E-13 \\
 512000 &  0.75\E+00 &  0.10\E+01 &  0.71\E-01 &  0.19\E+03 &  0.88\E-13 \\
1024000 &  0.17\E+01 &  0.23\E+01 &  0.15\E+00 &  0.81\E+03 &  0.14\E-12 \\
\end{array}
$$
\caption{Numerical results for uniformly random points in $[1,10]$.}
\label{figrand}
\end{table}
\begin{table}[h!]
\centering
\begin{minipage}{4.5in}
$$
\begin{array}{r|c|c|c|c|c}
     n &     t_w    &    t_p    &     t_u    &     t_d    &    \epsilon_r \\ 
\hline
   1000&  0.54\E-03 & 0.12\E-02 &  0.74\E-04 &  0.60\E-03 & 0.11\E-14 \\ 
   2000&  0.15\E-02 & 0.26\E-02 &  0.15\E-03 &  0.24\E-02 & 0.14\E-14 \\
   4000&  0.38\E-02 & 0.51\E-02 &  0.37\E-03 &  0.99\E-02 & 0.39\E-14 \\
   8000&  0.83\E-02 & 0.10\E-01 &  0.85\E-03 &  0.38\E-01 & 0.35\E-14 \\
  16000&  0.19\E-01 & 0.23\E-01 &  0.17\E-02 &  0.14\E+00 & 0.58\E-14 \\
  32000&  0.41\E-01 & 0.48\E-01 &  0.37\E-02 &  0.62\E+00 & 0.89\E-14 \\
  64000&  0.98\E-01 & 0.90\E-01 &  0.82\E-02 &  0.24\E+01 & 0.12\E-13 \\
 128000&  0.22\E+00 & 0.19\E+00 &  0.23\E-01 &  0.10\E+02 & 0.19\E-13 \\
 256000&  0.44\E+00 & 0.47\E+00 &  0.32\E-01 &  0.40\E+02 & 0.26\E-13 \\
 512000&  0.84\E+00 & 0.94\E+00 &  0.73\E-01 &  0.19\E+03 & 0.52\E-13 \\
1024000&  0.19\E+01 & 0.19\E+01 &  0.14\E+00 &  0.84\E+03 & 0.64\E-13 \\
\end{array}
$$
\end{minipage}
\vspace{1ex}
\caption{Numerical results for Chebyshev nodes on $[-1,1]$.}
\label{figcheb}
\end{table}

\begin{table}[h!]
\centering
$$
\begin{array}{r|c|c}
n & t_u & t_{f} \\
\hline
    1000 &0.91E-04 &0.16E-04 \\ 
    2000 &0.28E-03 &0.37E-04 \\
    4000 &0.41E-03 &0.44E-04 \\
    8000 &0.93E-03 &0.85E-04 \\
   16000 &0.18E-02 &0.24E-03 \\
   32000 &0.38E-02 &0.41E-03 \\
   64000 &0.81E-02 &0.88E-03 \\
  128000 &0.18E-01 &0.19E-02 \\
  256000 &0.38E-01 &0.59E-02 \\
  512000 &0.71E-01 &0.12E-01 \\
 1024000 &0.14E+00 &0.25E-01 \\
\end{array}
$$
\caption{Time comparison with FFT.} \label{figfft}
\end{table}

\subsection{Computing nodes and weights} \label{nodesandweights}
The algorithm of \S \ref{algomain} is described under the assumption that nodes
$t_1,\ldots,t_m$ and weights $w_1,\ldots,w_m$ are given such that 
\begin{equation} \label{eqapp6}
\left| \frac{1}{r} - \sum_{j=1}^m  w_j e^{-r t_j} \right| \le \varepsilon
\quad \text{for} \quad 
r \in [\delta (b-a), b-a],
\end{equation}
where $\varepsilon > 0$ and $\delta > 0$ are fixed parameters. As in the
proof of Lemma \ref{quadlem} we note that by rescaling  $r$ it suffices to find
nodes and weights satisfying 
\begin{equation} \label{eqap5}
\left| \frac{1}{r} - \sum_{j=1}^m  w_j e^{-r t_j} \right| \le \varepsilon
\quad \text{for} \quad 
r \in [1, \delta^{-1}].
\end{equation}
Indeed, if the nodes $t_1,\ldots,t_m$ and weights $w_1,\ldots,w_m$ satisfy
\eqref{eqap5}, then the nodes $t_1/(b-a),\ldots,t_m/(b-a)$ and weights
$w_1/(b-a),\ldots,w_m/(b-a)$ will satisfy \eqref{eqapp6}. Thus, in order to
implement the algorithm of \S \ref{algomain} it suffices to tabulate nodes and
weights that are valid for $r \in [1,M]$ for various values of $M$. In  the
implementation used in the numerical experiments in this paper, we tabulated
nodes and weights valid for $r \in [1,M]$ for
$$
M = [1,4^k] \quad \text{for} \quad k = 1,\ldots,10.
$$
For example, in Tables \ref{fig01} and \ref{fig02} we have listed $m = 33$ nodes
$t_1,\ldots,t_{33}$ and weights $w_1,\ldots,w_{33}$ such that 
$$
\left| \frac{1}{r} - \sum_{j=1}^{33} w_j e^{-r t_j} \right| \le 10^{-15},
$$
for all $r \in [1,1024]$.
\begin{table}[h!]
\centering
\begin{minipage}{4.5in}
{\small
\begin{verbatim}
0.2273983006898589D-03,0.1206524521003404D-02,0.3003171636661616D-02,
0.5681878572654425D-02,0.9344657316017281D-02,0.1414265501822061D-01,
0.2029260691940998D-01,0.2809891134697047D-01,0.3798133147119762D-01,
0.5050795277167632D-01,0.6643372693847560D-01,0.8674681067847460D-01,
0.1127269233505314D+00,0.1460210820252656D+00,0.1887424688689547D+00,
0.2435986924712581D+00,0.3140569015209982D+00,0.4045552087678740D+00,
0.5207726670656921D+00,0.6699737362118449D+00,0.8614482005965975D+00,
0.1107074709906516D+01,0.1422047253849542D+01,0.1825822499573290D+01,
0.2343379511131976D+01,0.3006948272874077D+01,0.3858496861353812D+01,
0.4953559345813267D+01,0.6367677940017810D+01,0.8208553424367139D+01,
0.1064261195532074D+02,0.1396688222191633D+02,0.1889449184151398D+02
\end{verbatim}
}
\end{minipage}
\vspace{1ex}
\caption{A list of $33$ nodes $t_1,\ldots,t_{33}$.} \label{fig01}
\end{table}

\begin{table}[h!]
\centering
\begin{minipage}{4.5in}
{\small
\begin{verbatim}
0.5845245927410881D-03,0.1379782337905140D-02,0.2224121503815854D-02,
0.3150105276431181D-02,0.4200370923383030D-02,0.5431379037435571D-02,
0.6918794756934398D-02,0.8763225538492927D-02,0.1109565843047196D-01,
0.1408264766413004D-01,0.1793263393523491D-01,0.2290557147478609D-01,
0.2932752351846237D-01,0.3761087060298772D-01,0.4828044150885936D-01,
0.6200636888239893D-01,0.7964527252809662D-01,0.1022921587521237D+00,
0.1313462348178323D+00,0.1685948994092301D+00,0.2163218289369589D+00,
0.2774479391081561D+00,0.3557192797195578D+00,0.4559662159666857D+00,
0.5844792718191478D+00,0.7495918095861060D+00,0.9626599456939077D+00,
0.1239869481076760D+01,0.1605927580173348D+01,0.2102583514906888D+01,
0.2811829220697454D+01,0.3937959064316012D+01,0.6294697335695096D+01
\end{verbatim}
}
\end{minipage}
\vspace{1ex}
\caption{A list of $33$ weights $w_1,\ldots,w_{33}$.} \label{fig02}
\end{table}

The nodes and weights satisfying \eqref{eqap5} can be computed by using
a procedure for generating generalized Gaussian quadratures for Chebyshev
systems together with the proof of Lemma \ref{approx}. Indeed,  Lemma
\ref{approx} is constructive with the exception of the step that invokes Lemma
\ref{krein} of Kre\u{\i}n. The procedure described in
\cite{MR2671296} is a  constructive version of Lemma \ref{krein}: given a
Chebyshev system of functions, it generates the corresponding quadrature nodes
and weights. We remark that generalized Gaussian quadrature generation codes are
a powerful tools for numerical computation with a wide range of applications.
The quadrature generation code used in this paper was an optimized version of
\cite{MR2671296} recently developed by Serkh for \cite{MR3564124}.

\subsection*{Acknowledgements}
The authors would like to thank Jeremy Hoskins for many useful
discussions.  Certain commercial equipment is identified in this paper
to foster understanding. Such identification does not imply
recommendation or endorsement by the National Institute of Standards
and Technology, nor does it imply that equipment identified is
necessarily the best available for the purpose.

\end{document}